\newtheorem{theorem}{Theorem}
\newtheorem{lemma}[theorem]{Lemma}
\newtheorem{corollary}[theorem]{Corollary}
\newtheorem{example}[theorem]{Example}
\newcommand{\uu}{\mathbf{u}}
\newcommand{\vv}{\mathbf{v}}
\newcommand{\ww}{\mathbf{w}}
\def\x{{\bf x}}
\def\y{{\bf y}}
\newcommand{\Z}{\mbox{${\mathbb
 Z}$}}
\title{Counting patterns in colored orthogonal arrays}
\author{A. Montejano and O. Serra
}
\address{Universitat Polit\`ecnica de Catalunya\\ Jordi Girona, 1,
E-08034 Barcelona, Spain\\e-mail:oserra@mat.upc.es}
\begin{document}

\maketitle

\begin{abstract} Let $S$ be an orthogonal array $OA(d,k)$  and let $c$ be an
$r$--coloring of its ground set $X$. We give a combinatorial
identity which relates the number of vectors in  $S$ with given
color patterns under  $c$ with the cardinalities of the color
classes.  Several applications of the identity are considered. Among
them, we show that every equitable $r$--coloring of the integer
interval $[1,n]$ has at least $\frac{1}{2}(\frac{n}{r})^2+O(n)$
monochromatic Schur triples. We also show that in an orthogonal array $OA(d,d-1)$,
the number of monochromatic vectors of each color depends only on the number of vectors which
miss that color and the cardinality of the color class.
\end{abstract}

\section{Introduction}

Arithmetic Ramsey Theory can be seen as the study of   the existence
of monochromatic structures, like arithmetic progressions or
solutions of linear systems, in every coloring of sets of integers.
The early results in the area are the theorem of Schur on
monochromatic solutions of the equation $x+y=z$, the Van der Waerden
theorem on monochromatic arithmetic progressions or the common
generalization of the two, Rado's theorem, on monochromatic
solutions of linear systems. Anti--Ramsey results refer to the study
of combinatorial structures with elements of pairwise distinct
colors, or rainbow structures, a subject started by Erd\H{o}s,
Simonovits and S\'os \cite{ess} which has received much attention
since then. Canonical Ramsey theory collects results ensuring the
existence of either a monochromatic or a rainbow structure.
Jungi\'c,  Licht,  Mahdian, Ne\v{s}et\v{r}il and Radoi\v{c}i\'c
\cite{jlmnr} started what they call Rainbow Ramsey Theory, which
concerns the study of rainbow structures in colourings of sets of
integers.

Counting versions of Arithmetic Ramsey results have also been
obtained. Frankl, Graham and R\"odl \cite{fgr88} prove that, in a
finite coloring of an integer interval, actually a positive fraction
of all solutions of a partition regular system are monochromatic.
They also prove that, for every coloring of an integer interval, a
positive fraction of all solutions of such a system are either
monochromatic or rainbow.

Some of the above phenomena on the existence and number  of color
patterns in combinatorial structures behave in a particularly nice
way when considered in finite groups. A simple example is the fact
that the total number of monochromatic Schur triples in every
two--coloring of the group of integers modulo $n$ depends only on
the cardinality of the color classes but not on the distribution of
the colors, a fact first noticed, as far as we know,  by Datskowsky
\cite{d}. The same is true for monochromatic three--term arithmetic
progressions when $n$ is relatively prime with $6$, as noted by
Croot \cite{croot}. In \cite{ccs} a combinatorial counting argument
was given which explains the above two results and provides the
ground for further generalizations in three directions. First,
results like the above mentioned ones can be extended to general
finite groups. Actually the universe to be colored needs not to be
even a group, but simply the base set of an orthogonal array.
Second, the monochromatic structures include Schur triples,
arithmetic progressions, or solutions of more general equations in
groups. Third, the counting argument can be applied to colorings
with more than two colors and can also be used  to study rainbow
structures or specific color patterns. Of course there are
limitations in such general results, which become less precise with
the increasing complexity of the structures we consider.

We give in Section \ref{sec:oa} a general formulation of the basic
counting lemma (Lemma \ref{lem:counting}) which is based in a
counting argument used in \cite{ccs}. Section \ref{sec:OA(3,2)}
collects some specific applications for orthogonal arrays $OA(3,2)$,
which include solutions of linear equations $ax+by+cz=d$ in an
abelian group of order coprime with $a$, $b$ and $c$ or, more
generally, equations of the form $x^{\alpha}y^{\beta}z^{\gamma}=b$
in a group $G$, where $\alpha, \beta,\gamma$ are automorphisms of
$G$. In this case Lemma \ref{lem:counting} leads to a relationship
between monochromatic and rainbow triples which depends only on the
cardinalities of the color classes (Theorem \ref{thm:2m-r}). This
relationship provides results on the minimum number of monochromatic
or of rainbow triples in orthogonal arrays (Corollary
\ref{cor:density} and Corollary \ref{cor:(3,2)-3}).

In the general context of orthogonal arrays $OA(3,2)$ the
relationship between monochromatic and rainbow triples can not be
strengthened as illustrated in Example \ref{exm:feasible}. Section
\ref{sec:linear} particularizes to linear equations of the form
$ax+by+cz=d$ in an abelian group. In particular Theorem
\label{thm:2m-r} is used to obtain a lower bound on the number of
monochromatic Schur triples in an equitable $r$--coloring of the
integer interval $[1,n]$ (Theorem \ref{thm:schur}). For $3$--term
arithmetic progressions, colorings which are rainbow--free have been
characterized in \cite{ms09}. This characterization shows that every
coloring with smaller color class sufficiently large has a rainbow
triple. Here we obtain a similar result for general groups non
necessarily abelian (Corollary \ref{cor:3ap}).  It is also shown
that every equitable coloring of an abelian group has at least a
linear number of rainbow solutions of any given linear equation
which is partition regular (Corollary \ref{cor:rainbowpr}).

Section \ref{sec:d,d-1} is devoted to orthogonal arrays of the form
$OA(d,d-1)$. The main result establishes a relationship between
monochromatic vectors of a given color and vectors which miss that
color, which again depends only on the cardinality of the color
class (Theorem \ref{thm:d,d-1}). Particular instances are the main
result in \cite{ccs} on monochromatic vectors in $2$--colorings, and
a result by Balandraud \cite{balandraud} on rainbow vectors in
$3$--colorings.

The paper closes by considering the general case of orthogonal
arrays $OA(d,k)$ in Section \ref{sec:d,k} where Lemma
\ref{lem:counting} is used to show that, for every $r$--coloring of
the base set of an orthogonal array $OA(d,k)$,  a positive
proportion of all vectors has patterns in every ball of radius
$(d-k)$, where we use the $\ell_1$ distance in the set of
$r$--vectors identifying color patterns (Theorem \ref{thm:asym}).
This result provides a quantitative estimation which is
particularized in the case of almost monochromatic (all but at most
one entry of the same color) or almost rainbow (all but at most one
of the entries pairwise distinct) patterns (Theorem
\ref{thm:monorain}).

\section{A counting argument}\label{sec:oa}

Let $X$ be a finite set with cardinality $n$ and let $S$ be a set of
vectors in $X^d$. Let $c:X\rightarrow [1,r]$ be an $r$--coloring of
$X$ with color classes $X_1, \ldots ,X_r$.
 A vector  $\x=(x_1, x_2,\ldots, x_d)\in S$ is {\it
monochromatic} under $c$ if all its coordinates belong to the same
color class. When there are either no two coordinates of the same
color class or all colors are present, we say that the vector is
{\it rainbow} under $c$. We denote by $M=M(S)$ and $R=R(S)$ the set
of monochromatic and rainbow vectors in $S$ respectively.

A set $S$ of $d$-vectors with entries in
  $X$ is an \emph{orthogonal array} of degree~$d$ and
strength~$k$  if, for any choice of $k$ columns, each $k$-vector of
$X^k$ appears in exactly one vector of $S$. In other words, if we
specify any set of $k$ entries   $a_1,\cdots ,a_k$ and any set of
subscripts $1\le i_1<i_2<\cdots<i_k\le d$,  we find exactly one
vector $\y =(y_1, y_2,\ldots ,y_d)$ in $S$ with $y_{i_1}=a_1,
y_{i_2}=a_2,\ldots ,y_{i_k}=a_k$. We denote by $OA(d,k)$ the family
of orthogonal arrays of degree $d$ and strength $k$ on $X$.

Lemma \ref{counting} below is the basic tool we shall use. It is
based on the counting arguments used in \cite{ccs}.

In what follows we use the following notation. The color classes of
an $r$--coloring   of $X$ will be denoted by $X_1, X_2,\ldots ,X_r$,
and we denote by  $c_i=|X_i|/n$ the density of the $i$-th color
class. For a vector $\uu=(u_1,\ldots ,u_r)$ with nonnegative integer
entries, we denote by $| \uu |=\sum_i u_i$. The multinomial
coefficient ${d\choose u_1,u_2,\ldots ,u_r,d-| \uu|}$ will be
written as ${d \choose \uu}$. For a vector $\vv=(v_1,\ldots ,v_r)$
we write ${\vv\choose \uu}={v_1\choose u_1}{v_2\choose u_2}\cdots
{v_r\choose u_r}$. We use the convention ${v\choose u}=0$ if $v<u$
and ${0\choose 0}=1$.

\begin{lemma}\label{lem:counting}
Let $S$ be an orthogonal array  $OA(d,k)$ on  $X$ and let $c$ be an
$r$--coloring of $X$.

For each vector $\uu=(u_1,u_2,\ldots ,u_{r})$ with $|\uu|\le k$ the
following equality holds:
\begin{equation}\label{eq:counting}
\frac{1}{n^{k}}\sum_{|\vv|=d} {\vv \choose \uu}s(\vv)= {d\choose
\uu}c_1^{u_1}\cdots c_r^{u_r},
\end{equation}
where the sum is extended to all vectors $\vv= (v_1, v_2,\ldots
,v_r)$ with nonnegative integer entries, $|\vv|=d$, and $s(\vv)$ is
the number of vectors in $S$ with   $v_i$ coordinates in $X_i$ for
each $i=1,\ldots ,r$.
\end{lemma}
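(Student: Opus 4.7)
The plan is a double counting of the set $\mathcal{P}$ of pairs $(\x, (A_1,\ldots,A_r))$ where $\x\in S$ and $A_1,\ldots,A_r$ are pairwise disjoint subsets of $\{1,\ldots,d\}$ with $|A_i|=u_i$ such that every coordinate $x_j$ with $j\in A_i$ belongs to the color class $X_i$. Both sides of \eqref{eq:counting}, up to the factor $1/n^k$, will be recovered as two ways of enumerating $|\mathcal{P}|$.

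First I would count by fixing $\x$. For $\x\in S$ with exactly $v_i$ entries in $X_i$, the number of admissible tuples $(A_1,\ldots,A_r)$ is
\[
\binom{v_1}{u_1}\binom{v_2}{u_2}\cdots\binom{v_r}{u_r}=\binom{\vv}{\uu}.
\]
Grouping the vectors of $S$ according to their color profile $\vv$ with $|\vv|=d$ yields
\[
|\mathcal{P}|=\sum_{|\vv|=d}\binom{\vv}{\uu}\,s(\vv),
\]
which is $n^k$ times the left hand side of \eqref{eq:counting}.

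Next I would count by first choosing the position tuple and then the compatible vectors. The number of ordered tuples of disjoint sets $(A_1,\ldots,A_r)$ with $|A_i|=u_i$ inside $\{1,\ldots,d\}$ is the multinomial coefficient $\binom{d}{\uu}$. Once such a tuple is fixed, I choose a prescribed value in $X_i$ at each position of $A_i$, which can be done in $\prod_i |X_i|^{u_i}=n^{|\uu|}\prod_i c_i^{u_i}$ ways. Here is where the orthogonal array hypothesis enters: because $|\uu|\le k$ and $S$ is an $OA(d,k)$, a set of $|\uu|$ coordinates of an arbitrary $d$-vector extends to exactly $n^{k-|\uu|}$ vectors of $S$. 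Multiplying,
\[
|\mathcal{P}|=\binom{d}{\uu}\,n^{|\uu|}c_1^{u_1}\cdots c_r^{u_r}\cdot n^{k-|\uu|}=n^k\binom{d}{\uu}c_1^{u_1}\cdots c_r^{u_r}.
\]
Equating the two expressions for $|\mathcal{P}|$ and dividing by $n^k$ gives \eqref{eq:counting}.

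The only delicate point is the second count: one must verify that, for any $j\le k$ prescribed entries in $j$ prescribed positions, an $OA(d,k)$ contains exactly $n^{k-j}$ vectors realizing them. This follows by summing the strength-$k$ defining property over the $n^{k-j}$ ways to complete the prescription to a full $k$-tuple on a fixed superset of positions, and I would include a one-line justification of this fact before applying it with $j=|\uu|$.
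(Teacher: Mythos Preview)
Your proof is correct and follows essentially the same double-counting argument as the paper: both count, over all $r$-tuples $(A_1,\ldots,A_r)$ of disjoint position sets of type $\uu$, the vectors of $S$ whose entries at $A_i$ lie in $X_i$, using the $OA(d,k)$ property for one count and grouping by color profile $\vv$ for the other. Your explicit framing as a set $\mathcal{P}$ of pairs is arguably cleaner than the paper's version, which routes the same computation through the auxiliary sets $S(V)$ and a domination relation, but the underlying idea is identical.
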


\proof Given an ordered partition $V=(V_1,V_2, \ldots ,V_r)$ of
$[1,d]$, possibly with some empty parts, let us denote by $S(V)$ the
set of vectors in $S$ whose entries in $V_i$ belong to $X_i$, $1\le
i \le r$. An $r$-tuple of subsets $(U_1,U_2,\ldots ,U_r)$ of $[1,d]$
is of type $\uu=(u_1,u_2,\ldots ,u_r)$ if $|U_i|=u_i$, $1\le i\le
r$. Denote by ${\mathcal P}^r(\uu)$ the set of all $r$--tuples of
pairwise disjoint subsets of $[1,d]$ of type $\uu$. We say that $V$
dominates $U$, and write $V\succeq U$, if $V_i\supset U_i$, $1\le
i\le r$.

Since $S$ is an orthogonal array $OA(d,k)$, there are $k-| \uu|$
vectors in $S$ which meet a prescribed assignment of $| \uu|$
coordinates. Hence, for each  $r$--tuple of subsets $(U_1,U_2,\ldots
U_r)$ in ${\mathcal P}^r(\uu)$   there are
 $|X_1|^{u_1}|X_2|^{u_2}\cdots
|X_r|^{u_r}|X|^{k-| \uu|}$ vectors in $S$ whose entries in $U_i$
belong to $X_i$, $1\le i \le r$. Among these vectors we find   all
vectors in $S(V)$ for each partition $V$ which dominates $U$, that
is,
$$
\sum_{V\succeq U}S(V)=|X_1|^{u_1}|X_2|^{u_2}\cdots
|X_r|^{u_r}|X|^{k-| \uu|}.
$$
Each partition $V$ dominates ${|V_1|\choose u_1}{|V_2|\choose
u_2}\cdots {|V_r|\choose u_r}$   $r$--tuples  in ${\mathcal
P}^r(\uu)$. Summing up through all $r$--tuples in ${\mathcal
P}^r(\uu)$ we get vectors counted by $s(\vv)$ for each $\vv$ which
dominates componentwise the vector $\uu$:
\begin{eqnarray*}
{d\choose \uu}|X_1|^{u_1}|X_2|^{u_2}\cdots |X_r|^{u_r}|X|^{k-|
\uu|}&=&\sum_{U\in {\mathcal
P}^r(\uu)}\sum_{V\succeq U}S(V)\\
&=&\sum_{V\succeq U}\sum_{U\in {\mathcal P}^r(\uu)}S(V)\\
&=&\sum_{|\vv|=d} {v_1\choose u_1}{v_2\choose u_2}\cdots {v_r\choose
u_r}\sum_{V\in {\mathcal P}^r(\vv)}S(V)\\&=&\sum_{|\vv|=d}
{v_1\choose u_1}{v_2\choose u_2}\cdots {v_r\choose u_r}s(\vv).
\end{eqnarray*}
Dividing by $n^{k}$ we get  equation \eqref{eq:counting}.\qed

\medskip

Lemma \ref{lem:counting} gives a relationship between the number of
vectors with some specific color patterns and the cardinalities of
the color classes. This identity may provide some precise formulas
for the number of vectors with a particular color pattern, or at
least approximate   counting results of a   general nature. In the
remaining of the paper we give some applications of these
identities.

\section{Colour patterns in $OA(3,2)$}\label{sec:OA(3,2)}

 For orthogonal arrays $O(3,2)$ we get a
nice relationship between monochromatic and rainbow vectors.

\begin{theorem}\label{thm:2m-r} Let  $S$ be an orthogonal array
$OA(3,2)$ on $X$ and $n=|X|$. For any  $r$--coloring of $X$ we have
\begin{equation}\label{eq:2m-r}
2|M|-|R|=n^2(3\sum_{i=1}^r c_i^2-1),
\end{equation}
where $M$ and $R$ denote the set of monochromatic and   rainbow
vectors of $S$ respectively.
\end{theorem}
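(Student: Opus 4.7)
The plan is to apply Lemma \ref{lem:counting} with the $r$ choices $\uu = 2\ee_i$ (that is, $u_i = 2$ and $u_j = 0$ for $j \neq i$), and then sum the resulting identities over $i$. Each such $\uu$ satisfies $|\uu| = 2 = k$, so the lemma applies. Since ${3 \choose \uu} = 3$ and ${\vv \choose \uu} = \binom{v_i}{2}$, Lemma \ref{lem:counting} yields
\begin{equation*}
\sum_{|\vv|=3} \binom{v_i}{2}\, s(\vv) \;=\; 3\, n^{2}\, c_i^{2}.
\end{equation*}
Summing over $i=1,\ldots,r$ and interchanging the two summations gives
\begin{equation*}
\sum_{|\vv|=3} \Bigl(\sum_{i=1}^r \binom{v_i}{2}\Bigr)\, s(\vv) \;=\; 3\, n^{2} \sum_{i=1}^{r} c_i^{2}.
\end{equation*}

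The next step is to identify the coefficient $\sum_i \binom{v_i}{2}$ according to the type of the pattern $\vv$ with $|\vv|=3$. A monochromatic pattern (some $v_i = 3$) contributes $\binom{3}{2}=3$; an ``almost monochromatic'' pattern with one $v_i = 2$ and one $v_j = 1$ contributes $\binom{2}{2}=1$; a rainbow pattern (three indices equal to $1$) contributes $0$. Writing $|AM|$ for the number of almost monochromatic vectors in $S$, the displayed sum thus equals $3|M| + |AM|$, so
\begin{equation*}
3|M| + |AM| \;=\; 3\, n^{2} \sum_{i=1}^{r} c_i^{2}.
\end{equation*}

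To close the argument, I use the fact that $S$ is an $OA(3,2)$, so specifying any two coordinates determines a unique vector, giving $|S|=n^{2}$; and every vector of $S$ is either monochromatic, almost monochromatic, or rainbow. Hence $|M|+|AM|+|R|=n^{2}$, i.e.\ $|AM|=n^{2}-|M|-|R|$. Substituting this into the displayed identity and rearranging produces
\begin{equation*}
2|M| - |R| \;=\; 3 n^{2} \sum_{i=1}^{r} c_i^{2} - n^{2} \;=\; n^{2}\Bigl(3 \sum_{i=1}^{r} c_i^{2} - 1\Bigr),
\end{equation*}
which is the claimed identity. There is no real obstacle: the entire argument is a disciplined application of Lemma \ref{lem:counting} to the right set of vectors $\uu$. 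The only point that requires care is the trichotomy of color patterns of length $3$ and checking that $\sum_i \binom{v_i}{2}$ indeed produces the weights $3$, $1$, $0$ on the three classes $M$, $AM$, $R$.
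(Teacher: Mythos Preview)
Your proof is correct and follows essentially the same route as the paper: apply Lemma~\ref{lem:counting} with $\uu=2\ee_i$ and sum over $i$ to obtain $3|M|+|AM|=3n^2\sum_i c_i^2$ (the paper's equation~\eqref{eq:2}, with your $|AM|$ being their $|T(2,1)|$), then combine this with the partition identity $|M|+|AM|+|R|=n^2$ (the paper derives this as equation~\eqref{eq:1} via the lemma with $\uu=\mathbf{0}$, you state it directly from $|S|=n^2$). The two arguments are the same up to notation.
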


\begin{proof} By taking $\uu=(0,0,0)$ in Lemma \ref{lem:counting} we get
\begin{equation}\label{eq:1}
|X|^2=\sum_{\|v\|=3} s(\vv)=|M|+|R|+|T(2,1)|,
\end{equation}
where $T(2,1)=S\setminus \{ M\cup R\}$ denotes the set of vectors in
$S$ with exactly two entries of the same colour.

On the other hand, the choice of $\uu=(2,0,0)$ in Lemma
\ref{lem:counting} gives
$$
3|X_1|^2=3 s(3,0,0)+s(2,1,0)+s(2,0,1). $$ Adding up similar
countings with $(0,2,0)$ and $(0,0,2)$, we have
\begin{equation}\label{eq:2}
3\sum_{i=1}^r |X_i|^2=3|M|+|T(2,1)|.
\end{equation}
The result follows by substracting (\ref{eq:1}) from (\ref{eq:2}).
\end{proof}

As an  immediate consequence of Theorem \ref{thm:2m-r} we get:

\begin{corollary}\label{cor:density} Let $c$ be an $r$--coloring  of the base set of an
orthogonal array $OA(3,2)$ with $\alpha_c=3\sum_{i=1}^r c_i^2-1.$ If
$\alpha_c>0$ then  there are at least  $\alpha_c n^2$
 monochromatic triples and, if $\alpha_c<0$, then there are at least $|\alpha_c|n^2$ rainbow triples.

 In particular, every equitable coloring with $r\ge 4$ colors has at
 least $$(1-3/r)n^2$$ rainbow triples.
\end{corollary}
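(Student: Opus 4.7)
The plan is to read the two inequalities directly off the identity $2|M|-|R|=\alpha_c n^2$ furnished by Theorem~\ref{thm:2m-r}, and then specialize to the equitable case.

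When $\alpha_c>0$, the trivial bound $|R|\ge 0$ rearranges the identity to $2|M|\ge \alpha_c n^2$, producing the claimed linear lower bound on the number of monochromatic triples. When $\alpha_c<0$, the trivial bound $|M|\ge 0$ rearranges the identity instead to $|R|\ge -\alpha_c n^2=|\alpha_c|n^2$, which is exactly the asserted rainbow bound.

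For the ``in particular'' clause, in an equitable $r$-coloring every density satisfies $c_i=1/r$, so the definition of $\alpha_c$ collapses to $\alpha_c=3\sum_{i=1}^r c_i^2-1=3/r-1$. For $r\ge 4$ this is strictly negative with $|\alpha_c|=1-3/r$, and the second regime of the previous paragraph yields $|R|\ge(1-3/r)n^2$, as required.

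No real obstacle arises: the whole corollary is a one-step consequence of Theorem~\ref{thm:2m-r} combined with the nonnegativity of $|M|$ and $|R|$. The only point that merits care is keeping track of the factor of $2$ multiplying $|M|$ in the identity when translating the inequality into a bound on the number of monochromatic triples themselves.
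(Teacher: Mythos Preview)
Your approach is exactly what the paper intends: it declares the corollary an immediate consequence of Theorem~\ref{thm:2m-r}, via precisely the nonnegativity of $|M|$ and $|R|$ that you invoke, and your treatment of the equitable $r\ge 4$ case is correct and complete.

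One point you allude to but do not resolve: from $2|M|-|R|=\alpha_c n^2$ and $|R|\ge 0$ you obtain $2|M|\ge\alpha_c n^2$, hence only $|M|\ge(\alpha_c/2)n^2$, not the $\alpha_c n^2$ printed in the statement. This appears to be a slip in the paper rather than a gap in your argument: with a two-coloring of densities near $(1,0)$ one has $\alpha_c$ close to $2$ while $|M|\le |S|=n^2$, so the stated bound cannot hold as written; and Corollary~\ref{cor:(3,2)-3}, derived in the same way, correctly carries the factor $1/2$. The rainbow bound is unaffected, since there the factor of $2$ multiplies $|M|$ rather than $|R|$.
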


In the context of orthogonal arrays there are examples which show
that essentially all solutions for $|M|$ and $|R|$ in equation
\eqref{eq:2m-r} are possible values for the number of monochromatic
and rainbow vectors in an orthogonal array whose points are colored.
We illustrate this fact with the following example.

\begin{example}\label{exm:feasible} {\rm Let $Y$ be a multiplicative quasigroup (we only require the cancellation  law)
and consider the quasigroup $X=Y\times \Z_3$ with
$(x,i)*(y,j)=(xy,2(i+j))$. The set of triples $\{(x,i), (y,j),
(x,i)*(y,j): (x,i), (y,j)\in X\}$ is an orthogonal array. The
coloring $\chi (x,i)=i$ on $X$ has the maximum possible number
$3|Y|^2$ of monochromatic triples and the maximum possible number
$6|Y|^2$ of rainbow triples for an equitable coloring of $X$.

Let $L$ be the latin square on $X$ with entries
$L((x,i),(y,j))=(x,i)*(y,j)$ for each $(x,i), (y,j)\in X$.  Let
$U,V\subset Y$ be subsets of $Y$. Exchange the entries in $L$ of the
form $(xy,0)$ with $(xy,1)$ for every $x\in U$ and $y\in V$. The
resulting orthogonal array has $3|Y|^2-2|S|\cdot|T|$ monochromatic
triples for the same coloring of $X$.

Let $L'$ be the latin square  obtained by the above procedure with
$U=V=Y$. For each pair $U',V'\subset Y$ we can now exhange the
entries $(xy,0)$ with $(xy,2)$ whenever $x\in U'$ and $y\in V'$. The
resulting orthogonal array has $|Y|^2-|U'|\cdot |V'|$ monochromatic
triples for the same coloring. By choosing $U'=V'=Y$, there are no
monochromatic, and therefore no rainbow, triples. These are examples
of equitable colorings in orthogonal arrays for each value of
$|M|\in [0,|Y|^2]\cup (|Y|^2+2\cdot [0,|Y|^2])$.}\qed
\end{example}

For two--colorings there are no rainbow triples, so that Theorem
\ref{thm:2m-r} gives a formula for the total number of monochromatic
triples in terms of the cardinalities of the color classes. By
minimizing that formula (with each color class of density $1/2$) we
get the minimum number of monochromatic triples in an orthogonal
array $OA(3,2)$ for any two-coloring of its ground set $X$. More
precisely, we have the next Corollary which is a natural
generalization of Corollary 3.1 in \cite{ccs},

\begin{corollary}\label{cor:(3,2)-2}
Let  $S$ be an orthogonal array $OA(3,2)$ on $X$. For any 2-coloring
of $X$ we have
$$
|M|=|X_1|^2-|X_1|\cdot|X_2|+|X_2|^2. $$ In particular, for any
$2$--coloring of $X$, there are at least $n^2/4$ monochromatic
triples in $S$ .
\end{corollary}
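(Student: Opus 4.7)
The plan is to simply specialize Theorem \ref{thm:2m-r} to the case $r=2$ and observe that the rainbow term vanishes. For $r=2$ and triples ($d=3$) there is no way to have three pairwise distinctly colored coordinates, so $|R|=0$. Hence equation \eqref{eq:2m-r} collapses to
\[
2|M|=n^{2}(3c_{1}^{2}+3c_{2}^{2}-1).
\]
Using $c_{1}+c_{2}=1$ to write $1=(c_{1}+c_{2})^{2}=c_{1}^{2}+2c_{1}c_{2}+c_{2}^{2}$, the right hand side simplifies to $2n^{2}(c_{1}^{2}-c_{1}c_{2}+c_{2}^{2})$. Multiplying through by $n^{2}$ (and dividing by $2$) converts densities into cardinalities and yields the asserted identity $|M|=|X_{1}|^{2}-|X_{1}||X_{2}|+|X_{2}|^{2}$.

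For the numerical bound I would then use $|X_{1}|+|X_{2}|=n$ to rewrite
\[
|X_{1}|^{2}-|X_{1}||X_{2}|+|X_{2}|^{2}=(|X_{1}|+|X_{2}|)^{2}-3|X_{1}||X_{2}|=n^{2}-3|X_{1}||X_{2}|.
\]
The product $|X_{1}||X_{2}|$ is maximized under the constraint $|X_{1}|+|X_{2}|=n$ when $|X_{1}|=|X_{2}|=n/2$, where it equals $n^{2}/4$. Therefore $|M|\geq n^{2}-3(n^{2}/4)=n^{2}/4$, with the minimum attained by the equitable $2$-coloring.

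There is no real obstacle here; the only small point worth double-checking is the justification that $|R|=0$ in the $2$-color setting, which follows immediately from the definition of rainbow (three pairwise distinct colors are required, but only two are available). Everything else is algebraic simplification and the elementary extremal step $|X_{1}||X_{2}|\leq n^{2}/4$.
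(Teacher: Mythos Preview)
Your proposal is correct and follows essentially the same approach as the paper: specialize Theorem~\ref{thm:2m-r} to $r=2$, note that rainbow triples are impossible so $|R|=0$, derive the closed formula for $|M|$, and then minimize by taking the equitable coloring. You have simply spelled out the algebra and the extremal step in more detail than the paper does.
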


In the case of three--colorings Theorem \ref{thm:2m-r} has a nice
interpretation. Let us call $$\sigma^2_c=\sum_{i=1}^r
c_i^2/r-(\sum_{i=1}^r c_i/r)^2$$ the {\it variance} of an
$r$--coloring $c$. For $r=3$ the expression on the right of equation
\eqref{eq:2m-r} coincides, up to a constant, with the variance of
the coloring (in particular is always nonnegative.) Theorem
\ref{thm:2m-r} can be restated for three--colorings  in the
following form.

\begin{corollary}\label{cor:(3,2)-3}
Let  $S$ be an orthogonal array $OA(3,2)$ on $X$. For any 3-coloring
of $X$ we have
\begin{equation}\label{eq:var}
2|M|-|R|=9\sigma^2_c n^2 .
\end{equation}
In particular,  there are at least $(9\sigma^2_c/2)n^2$
monochromatic triples.
\end{corollary}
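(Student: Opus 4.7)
The plan is to derive this directly from Theorem \ref{thm:2m-r} by a short algebraic manipulation specific to the case $r=3$. Since a $3$--coloring satisfies $c_1+c_2+c_3=1$, the mean $\sum_{i=1}^3 c_i/3$ equals $1/3$, so the variance simplifies to
\[
\sigma_c^2 \;=\; \frac{1}{3}\sum_{i=1}^3 c_i^2 \;-\; \frac{1}{9}.
\]
Multiplying by $9$ gives $9\sigma_c^2 = 3\sum_{i=1}^3 c_i^2 - 1$, which is precisely the bracketed factor appearing in equation \eqref{eq:2m-r}. Substituting into Theorem \ref{thm:2m-r} yields \eqref{eq:var} at once.

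For the lower bound on $|M|$, I would observe that $|R|\ge 0$ trivially, so $2|M|\ge 2|M|-|R| = 9\sigma_c^2 n^2$, giving $|M|\ge (9\sigma_c^2/2) n^2$. Note that this is a nontrivial lower bound precisely because $\sigma_c^2\ge 0$ (this is Cauchy--Schwarz applied to $(c_1,c_2,c_3)$ and $(1,1,1)$, or simply the nonnegativity of the variance), with equality iff the coloring is equitable.

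I expect no real obstacle here: the entire content is the algebraic coincidence that $3\sum c_i^2 - 1$ is nine times the variance when $\sum c_i = 1$ and $r=3$, which is just the identity $\mathrm{Var}(c) = \mathbb{E}[c^2] - (\mathbb{E}[c])^2$ unwound for a three-point uniform average. The only thing worth double-checking during the write-up is the constant: $3\sum c_i^2 - 1 = 3(\sum c_i^2 - 1/3) = 9(\sum c_i^2/3 - 1/9) = 9\sigma_c^2$, which confirms the factor of $9$.
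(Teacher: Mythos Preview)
Your proposal is correct and matches the paper's own (implicit) argument: the paper simply notes that for $r=3$ the right-hand side of \eqref{eq:2m-r} equals $9\sigma_c^2 n^2$, which is exactly the algebraic identity you verify, and the lower bound on $|M|$ follows since $|R|\ge 0$ and $\sigma_c^2\ge 0$.
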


\section{Linear equations}\label{sec:linear}

Natural extensions of results in Arithmetic Ramsey Theory concern
the study of color patterns of structures in groups. The results in
Section \ref{sec:oa} can be directly applied to this setting. The
set of solutions of a linear equation of the form $$ ax+by+cz=d$$ in
an abelian group of order coprime with $a,b$ and $c$ forms an
orthogonal array $OA(3,2)$. In this case more precise information on
the number of monochromatic ir rainbow triples can be obtained,
usually depending on the particular equation we are considering.

There are  colorings with only rainbow triples. Take for instance
the set of solutions of the equation $$x+y+z=-1$$ in a cyclic group
of order $n\equiv 0 \pmod{3t}$ for some $t\ge 1$. Consider the
partition $A_i=[0, (n/3t)-1]+i(n/3t)$, $0\le i\le 3t-1$. We have
$A_i+A_i=[0,2(n/3t)-2]+2i (n/3t)$ and
$-A_i-1=[(3t-1)n/(3t),n-1]-i(n/3t)$, which are disjoint for each
$i$, so that there are no monochromatic triples for that equation.
Thus the lower bound for rainbow triples given in  Corollary
\ref{cor:density} is also best possible. Moreover, the same example
for $t=1$ shows that, for $\alpha_c=0$, there are colorings of
orthogonal arrays which have no monochromatic, and hence no rainbow,
triples.

The lower bound on the number of monochromatic triples in Corollary
\ref{cor:(3,2)-3} is also best possible. Consider for example Schur
triples in a group $G$, triples of the form $(x,y,z)$ with $xy=z$.
The set of Schur triples in a finite group forms an orthogonal array
$OA(3,2)$. Alekseev and Sachev \cite{as} proved that every
equinumerous $3$-coloring of the integers in $[1,3n]$ contains a
rainbow Schur-triple. The result was later improved by Sch\"onheim
\cite{schonheim} who proved that any $3$-coloring of the integers in
$[1,N]$ such that the smallest color class has more than $N/4$
elements contains a rainbow Schur triple, and this lower bound is
best possible. The following example shows that for finite groups
there are also  $3$--colorings with no rainbow Schur triples such
that the smaller color class has cardinality $n/4$.

\begin{example}\label{exm:rainbowfree}
{\rm  Let $K<H<G$ be two subgroups of a finite group  $G$ such that
$K$ has index two in $H$ and $H$ has index two in $G$. Give color
$1$ to the elements in $K$, color $2$ to the elements in $H\setminus
K$ and color by $3$ the remaining elements of the group. In this
example $X_1X_2=X_2$ and $X_1X_3=X_2X_3=X_3$. Thus there are no
rainbow Schur triples under this coloring.}\qed
\end{example}

It is not clear to us that the lower bound $n/4$ for the size of the
smaller color class is tight in the case of three--colorings of
groups with no rainbow Schur triples.

Theorem \ref{thm:2m-r} can also be used to estimate the minimum
number of monochromatic triples in colorings of the integers.
Robertson and Zeilberger \cite{rz} showed that the minimum number of
monochromatic Schur triples in a  two coloring of the integer
interval $[1,n]$
 is $n^2/11+O(n)$.
These authors exhibit a coloring with color classes of density
$6/11$ and $5/11$ which attains the lower bound. The same result was
obtained by Schoen \cite{schoen} and Datskovsky \cite{d}. The later
author used Corollary \ref{cor:(3,2)-2} as an intermediate step of
his proof. By using Theorem \ref{thm:2m-r} one can obtain a simple
proof of a lower bound on the number of Schur triples in an {\it
equitable} coloring of $[1,n]$ with an arbitrary number of colors.

\begin{theorem}\label{thm:schur} Any equitable $r$--coloring of the integer interval $[1,n]$ has at least
$$
|M|\ge (1/2r^2)n^2+O(n)
$$
monochromatic Schur triples.
\end{theorem}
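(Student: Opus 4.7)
The plan is to reduce the statement to Theorem~\ref{thm:2m-r} by embedding the interval $[1,n]$ into the cyclic group $\Z_n$, where the Schur equation is an orthogonal array, and then correcting for the ``wrap-around'' discrepancy between the Schur equation on $\Z_n$ and on $[1,n]$.

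Identifying $[1,n]$ with $\Z_n$, the set of solutions of $x+y\equiv z\pmod n$ forms an $OA(3,2)$, and the equitable $r$--coloring of $[1,n]$ becomes an equitable coloring of $\Z_n$ with all densities $c_i=1/r$. Theorem~\ref{thm:2m-r} gives the identity $2|M_{\Z_n}|-|R_{\Z_n}|=n^2(3/r-1)$, and applying Corollary~\ref{cor:(3,2)-2} to each of the $r$ two
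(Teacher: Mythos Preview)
Your proposal is cut off mid-sentence, but enough of the plan is visible to identify a real obstacle.

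\textbf{The wrap-around problem in $\Z_n$.} When you identify $[1,n]$ with $\Z_n$, a Schur triple $x+y\equiv z\pmod n$ is either a genuine triple $x+y=z$ or a ``wrap-around'' triple $x+y=z+n$. Thus $|M_{\Z_n}|=|M|+|M_{\mathrm{wrap}}|$, and the identity from Theorem~\ref{thm:2m-r} reads
\[
2|M|+2|M_{\mathrm{wrap}}|-|R_{\Z_n}|=(3/r-1)n^2.
\]
To extract a lower bound on $|M|$ you would need an \emph{upper} bound on $|M_{\mathrm{wrap}}|$ valid for every equitable coloring. But $|M_{\mathrm{wrap}}|$ can be of the same order as $|M|$ and depends heavily on the coloring (for instance, if a color class sits in an initial interval its wrap-around count is $0$; if it sits in a final interval, essentially all its monochromatic triples are wrap-around). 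There is no uniform control here. As for the suggested use of Corollary~\ref{cor:(3,2)-2} on the $r$ two-colorings $(X_i,\Z_n\setminus X_i)$: summing those identities and eliminating $|T(2,1)|$ recovers exactly $2|M_{\Z_n}|-|R_{\Z_n}|=(3/r-1)n^2$ again, so it adds no new equation.

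\textbf{How the paper avoids this.} The paper embeds $[1,n]$ into $\Z_{2n}$, not $\Z_n$, and introduces an \emph{extra} color class $X_{r+1}=[n+1,2n]$. The point is that for $i\le r$ one has $X_i\subset[1,n]$, so $x,y\in X_i$ forces $x+y\le 2n$: there is \emph{no} wrap-around in $\Z_{2n}$ for colors $1,\ldots,r$, and the monochromatic count for these colors in $\Z_{2n}$ is exactly $|M|$ (up to $O(n)$). The price is a large block $X_{r+1}$ whose own monochromatic count and whose contribution to the rainbow count must be estimated; the paper does this by an explicit (and elementary) lower bound on the number of rainbow triples having at least one coordinate in $X_{r+1}$.

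In short, the embedding into $\Z_n$ is the wrong move: doubling the modulus and paying with one extra color is precisely what kills the wrap-around term and makes Theorem~\ref{thm:2m-r} usable.
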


\begin{proof} Let $N=2n$ and consider the $(r+1)$ coloring $\{X_1,X_2,\ldots , X_r, X_{r+1}\}$ of the cyclic group $\Z/N\Z$ where
$\{X_1,X_2,\ldots , X_r\}$ is the given three--coloring of $[1,n]$
and $X_{r+1}=[n+1,2n]$ (we identify the integers in $[1,2n]$ with
its representatives modulo $N$). We consider the  Schur triples of
$\Z/N\Z$ ordered as $(x,y,z)$ with $x+y=z$.

By \eqref{eq:2m-r} we have
\begin{eqnarray}
2|M'|-|R'|&=&(3\sum_{i=1}^{r+1}c_{i}^2-1)(2n)^2=(3(r(1/2r)^2+(1/2)^2)-1)4n^2\nonumber\\&=&-(1-3/r)n^2,\label{eq:linealschur}
\end{eqnarray}
where $M', R'$ are the sets of monochromatic and rainbow Schur
triples respectively, and $c_{i}=|X_i|/2n$ are the densities of the
color classes.

There are $|M_{X_{r+1}}|=n^2/2+O(n)$ Schur triples of color
$X_{r+1}$. The number $\sum_{i=1}^r|M_{X_i}|$ of monochromatic Schur
triples of the other colors coincides, up to $O(n)$ terms, with the
number  $|M|$ of monochromatic triples in the given coloring of the
integer interval $[1,n]$.

Let us estimate the number $|R'|$ of rainbow triples. For each $u\in
Y$, $Y\in \{ X_1,\ldots,X_r\}$, we have the triples $$(u,w-u,w),\;
w\in [1,u]\setminus Y,$$ and the triples $$(u-w,w,u),\; w\in
[u,n]\setminus Y.$$ Therefore, for each $u\in [1,n]$ there are
$(1-1/r)n+O(1)$ such rainbow triples (each counted twice according
to the permutation of the first two coordinates) giving rise to
$$(1-1/r)n^2+O(n)$$ rainbow Schur triples with the third coordinate
in $\cup_{i=1}^rX_i$. On the other hand, for each $u\in Y$, $Y\in \{
X_1,\ldots,X_r\}$, there also the rainbow Schur triples of the form
$$(n-u,w, n+w-u) \mbox{ and  } (w,n-u,n+w-u),\; w\in [u,n]\setminus
Y$$ with the third coordinate in $X_{r+1}$. There are at least
$(r-1)n/r-u+O(1)$ choices for such  $w$, giving a total of at least
\begin{eqnarray*}
2\sum_{u=1}^{(r-1)n/r} ((r-1)n/r-u+O(1))&=&
2(1-1/r)^2n^2-(1-1/r)^2n^2+O(n)\\&=&(1-1/r)^2n^2+O(n)
\end{eqnarray*}
such rainbow triples. By plugging this estimation in
\eqref{eq:linealschur} we get
$$
|M|\ge \frac{1}{2} \left(
((1-1/r)+(1-1/r)^2-1-(1-3/r))n^2+O(n)\right)=(1/2r^2)n^2+O(n).
$$

\end{proof}

Let us consider next $3$--term arithmetic progressions. Let $G$ be a
finite group and denote by $p(G)$ the smallest prime divisor of
$|G|$. A $d$-term arithmetic progression in a  finite group $G$ with
$p(G)\ge d$ is a set of the form $\{ a, ax, ax^2,\ldots ,ax^{k-1}\}$
where $a, x\in G$. When $G$ is abelian the set $AP(3)$ of $3$--term
arithmetic progressions correspond to solutions of the equation
$x-2y+z=0$.

By proving a conjecture in \cite{jlmnr} it was shown in \cite{ms}
that a $3$--coloring of an abelian group $G$ of order $n$ such that
the smaller color class has cardinality at least $n/2p(G)$ does have
rainbow $AP(3)$, and there are  three--colorings of abelian groups
in which the smallest color class has density $1/6$ and  are free of
rainbow $AP(3)$. For a non necessarily abelian group $G$ the
following can be proved.

\begin{corollary}\label{cor:3ap} A $3$--coloring of a
group $G$ with $p(G)>53$ with smaller color class of cardinality
$\alpha n$  has at least $(6\alpha (2-3\alpha)-29/15)n^2$  rainbow
$AP(3)$. In particular, if $\alpha>(0.2725)n$ then there is a
rainbow $AP(3)$.
\end{corollary}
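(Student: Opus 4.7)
The plan is to apply Theorem~\ref{thm:2m-r} with $r = 3$ to the set of $3$-term arithmetic progressions in $G$. First I would verify that the family of $AP(3)$'s is an $OA(3,2)$: since $p(G) > 2$ forces $|G|$ to be odd, the squaring map $g \mapsto g^2$ is a bijection of $G$, and hence any two of the entries of an ordered $3$-AP $(a, ax, ax^2)$ determine the third. Theorem~\ref{thm:2m-r} then gives
\begin{equation*}
|R| \;=\; 2|M| - n^2\Bigl(3\sum_{i=1}^{3} c_i^2 - 1\Bigr),
\end{equation*}
so that the problem reduces to finding a good lower bound on $|M| = \sum_{i=1}^{3} |M_{X_i}|$.

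The heart of the argument is a quasirandomness-type estimate of the form
\begin{equation*}
|M_{X_i}| \;\ge\; c_i^{\,3}\, n^2 \;-\; \varepsilon\, n^2,
\end{equation*}
valid for each color class, with an $\varepsilon$ that vanishes as $p(G) \to \infty$. Such an estimate expresses that a subset of density $c_i$ in a group with large smallest prime divisor contains close to its expected count $c_i^{\,3} n^2$ of monochromatic $3$-APs; it can be obtained via a Cauchy--Schwarz argument on the non-trivial Fourier side, using that $p(G) > 53$ forces large dimensions for the non-trivial irreducible representations of $G$ and hence smallness of the off-diagonal Fourier coefficients of $\mathbf{1}_{X_i}$. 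Summing over $i = 1, 2, 3$ and substituting into the identity of Theorem~\ref{thm:2m-r} produces
\begin{equation*}
\frac{|R|}{n^2} \;\ge\; 2\sum_{i} c_i^{\,3} - 3\sum_i c_i^{\,2} + 1 \;-\; 6\varepsilon \;=\; P(c_1, c_2, c_3) - 6\varepsilon.
\end{equation*}

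It remains to minimize $P$ over the simplex $\{\sum_i c_i = 1,\ c_i \ge \alpha\}$. A Lagrange multiplier analysis shows that the partial derivatives $\partial P/\partial c_i = 6c_i^2 - 6c_i$ force all $c_i$ to take at most two distinct values, whence the minimum is attained at a corner of the form $(\alpha, \alpha, 1-2\alpha)$, where a direct computation gives $P_{\min}(\alpha) = 6\alpha^{2}(1-2\alpha)$. An elementary algebraic check then yields $6\alpha^{2}(1-2\alpha) \;\ge\; 6\alpha(2-3\alpha) - 29/15 + 6\varepsilon$ for $\alpha \in [0, 1/3]$ provided $\varepsilon$ is no larger than the bound furnished by $p(G) > 53$, finishing the proof. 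The ``in particular'' clause follows at once since the quadratic $12\alpha - 18\alpha^{2} - 29/15$ has smaller positive root $\alpha \approx 0.2725$. The main obstacle is the middle step: the identity and the final optimization on the simplex are routine, but proving the quasirandomness bound on $|M_{X_i}|$ with the explicit dependence on $p(G)$ that makes the threshold $53$ sharp and $29/15$ the correct residual constant is the technically delicate part of the argument.
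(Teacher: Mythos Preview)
Your reduction via Theorem~\ref{thm:2m-r} and the verification that the $3$-APs form an $OA(3,2)$ are fine, but the ``heart of the argument'' has a genuine gap. The claimed quasirandomness estimate $|M_{X_i}|\ge c_i^{3}n^{2}-\varepsilon n^{2}$ with $\varepsilon=\varepsilon(p(G))\to 0$ is false, and the justification you give for it is based on an incorrect premise: the condition $p(G)>53$ does \emph{not} force the nontrivial irreducible representations of $G$ to have large dimension. For any abelian group, in particular for $\Z_p$ with $p$ an arbitrarily large prime, every irreducible representation is one--dimensional, so the Cauchy--Schwarz/Fourier argument you sketch gives nothing. And indeed no such bound can hold: in $\Z_p$ a set of fixed density $c$ can have a $3$-AP count far below $c^{3}p^{2}$, so there is no $\varepsilon(p)\to 0$ making your inequality true uniformly. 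Consequently the subsequent optimisation of $P(c_1,c_2,c_3)$ and the matching of the constant $29/15$ are built on sand.

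The paper's proof is both different and simpler. It does not attempt to bound each $|M_{X_i}|$ in terms of $c_i$; instead it invokes as a black box the result of \cite{ccs} that any $3$--coloring of a group with $p(G)>53$ has $|M|\ge n^{2}/30$ monochromatic $AP(3)$ in total. Separately, it bounds the quadratic $3\sum_i c_i^{2}-1$ from above by its value $18\alpha^{2}-12\alpha+2$ at the extremal point $(\alpha,\alpha,1-2\alpha)$ (a trivial convexity argument, no cubic optimisation needed). Plugging $2|M|\ge n^{2}/15$ and this upper bound into $|R|=2|M|-(3\sum_i c_i^{2}-1)n^{2}$ gives $(6\alpha(2-3\alpha)-29/15)n^{2}$ directly; the constant $29/15=2-1/15$ comes straight from the $n^{2}/30$ input. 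So the specific threshold $53$ and the constant $29/15$ are inherited from \cite{ccs}, not from any quasirandomness of the group.
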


\begin{proof} By
Corollary \ref{cor:(3,2)-3}, the number $|R|$ of rainbow $AP(3)$
satisfies
\begin{equation}\label{eq:Ratlast}
|R|=2|M|-9\sigma^2_cn^2.
\end{equation}
We have
$$
9\sigma_c^2=3\sum_{i=1}^3c_i^2-1\le
3(2\alpha^2+(1-2\alpha)^2)-1=18\alpha^2-12\alpha+2=6\alpha
(2\alpha-3)+2.
$$
For a group $G$ with $p(G)>53$, it is shown in \cite{ccs} that every
$3$--coloring of $G$ has at least $n^2/30$ monochromatic $AP(3)$. By
substitution in \eqref{eq:Ratlast} we get
$$
|R|\ge (1/15+6\alpha (3-2\alpha)-2)n^{2}=(6\alpha
(2-3\alpha)-29/15)n^2.
$$
The last part of the statement follows since the  coefficient of
$n^2$ in the above equation is positive if  $\alpha>(0.2725)n$.
\end{proof}

The equations $x+y-z=0$ and $x-2y+z=0$ for Schur triples and
$3$--term arithmetic progressions in abelian groups are examples of
{\it regular} equations, namely, equations of the form $ax+by+cz=0$
such that the sum of a nonempty subset of the coefficients is zero.
As another consequence of Corollary \ref{cor:(3,2)-3} we have the
following result concerning rainbow solutions of such equations.

\begin{corollary}\label{cor:rainbowpr} Let $ax+by+cz=0$ be a regular equation in an abelian group $G$ of order $n$.
For every equitable $3$--coloring of  $G$ there are at least $2n$
rainbow solutions of the equation.
\end{corollary}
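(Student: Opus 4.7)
The plan is to specialize Corollary~\ref{cor:(3,2)-3} to reduce the statement to a lower bound on $|M|$, and then use the regularity of the equation to produce the required monochromatic solutions. For an equitable 3-coloring one has $c_1=c_2=c_3=1/3$, so
\[
\sigma_c^2=\frac{1}{3}\sum_{i=1}^3 c_i^2-\Bigl(\frac{1}{3}\sum_{i=1}^3 c_i\Bigr)^2=\frac{1}{9}-\frac{1}{9}=0,
\]
and Corollary~\ref{cor:(3,2)-3} then yields $|R|=2|M|$. Thus the claim reduces to showing $|M|\ge n$.

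Regularity of $ax+by+cz=0$ means that some nonempty subset of $\{a,b,c\}$ has zero sum, and I would split the argument into two cases. If $a+b+c=0$, the diagonal $\{(g,g,g):g\in G\}$ is contained in the solution set because $(a+b+c)g=0$, and each of its $n$ triples is automatically monochromatic; this gives $|M|\ge n$ immediately. If instead $a+b+c\ne 0$ but a proper pair of coefficients sums to zero, by symmetry say $a+b=0$, then $c$ is coprime to $n$, the equation reads $a(x-y)+cz=0$, and the solution set $H$ decomposes as the disjoint union of $n$ cosets of the subgroup $K=\{(g,g,0):g\in G\}$, each of the form $C_d=\{(x,x+d,z_d):x\in G\}$ with $z_d=(a/c)d$. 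A member of $C_d$ is monochromatic iff $x$ and $x+d$ both lie in the color class $X_{\kappa(z_d)}$ of $z_d$, whence
\[
|M|=\sum_{d\in G}\bigl|X_{\kappa(z_d)}\cap(X_{\kappa(z_d)}-d)\bigr|.
\]
To bound this sum from below I would use that $d\mapsto z_d$ is a bijection of $G$, so $\kappa(z_d)$ takes each color exactly $n/3$ times, together with the identity $\sum_{d\in G}|X_i\cap(X_i-d)|=|X_i|^2=n^2/9$ and a convexity/averaging argument forcing the total to reach $n$.

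The first case is essentially free, so the principal obstacle is the second. The partial diagonal $K$ on its own contributes only $|X_{\kappa(0)}|=n/3$ monochromatic solutions, so the remaining $2n/3$ monochromatic solutions must be extracted from the non-diagonal cosets $C_d$. Executing this rigorously requires combining the translation-invariance $(x,y,z)\mapsto(x+t,y+t,z)$ of $H$ with the equitable distribution of colors across the $n$ cosets, and I expect the resulting double-counting to be the delicate point of the proof.
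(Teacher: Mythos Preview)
Your reduction is the same as the paper's: for an equitable $3$--colouring $\sigma_c^2=0$, so Corollary~\ref{cor:(3,2)-3} gives $|R|=2|M|$ and the task becomes $|M|\ge n$. Your treatment of the case $a+b+c=0$ via the diagonal $\{(g,g,g):g\in G\}$ also coincides with the paper's.

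The divergence is in the case $a+b=0$ with $a+b+c\ne 0$. The paper disposes of it in one line, exhibiting the $n$ ``solutions'' $\{(x,-x,0):x\in G\}$; but with $b=-a$ these are not solutions (the family with $z=0$ is $\{(x,x,0):x\in G\}$), and in any event only the $n/3$ of those with $x$ in the colour class of $0$ are monochromatic. So the paper's one--line argument does not establish $|M|\ge n$ here, and you are right to flag this case as the substantive one.

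The difficulty is that the target inequality $|M|\ge n$ is in fact \emph{false} in this case, so neither your averaging plan nor any other argument can close the gap. Take $G=\Z/6\Z$, the equation $x-y+z=0$ (regular via $a+b=0$, all coefficients units), and the equitable colouring $X_i=\{j:j\equiv i-1\pmod 3\}$. A solution $(x,x+z,z)$ is monochromatic iff $x\equiv z\equiv x+z\pmod 3$, which forces $x,z\in X_1$; hence $|M|=|X_1|^2=4<6=n$ and $|R|=2|M|=8<2n$. (For $G=\Z/3\Z$ the unique equitable colouring already gives $|M|=1$, $|R|=2$.) Concretely, your identity $\sum_{d\in G}|X_i\cap(X_i-d)|=|X_i|^2$ ranges over \emph{all} $d\in G$, whereas in your expression for $|M|$ only the $n/3$ shifts $d$ with $z_d\in X_i$ contribute for each $i$; the discarded terms can carry almost all of the mass, and no convexity transfers it back. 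The corollary as stated therefore needs an additional hypothesis (for instance $a+b+c=0$, under which your first case already gives the full result) to be correct.
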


\begin{proof} If $a+b+c=0$ then the system has the $n$ solutions $\{ (x,x,x): \; x\in G\}$.
If $a+b=0$ then the system has the $n$ solutions $\{(x,-x,0):\; x\in G\}$.
For an equitable $3$--coloring we have $\sigma_c^2=0$. Therefore Corollary \ref{cor:(3,2)-3}
gives $|R|\ge 2|M|\ge 2n$.
\end{proof}

\section{Color patterns in  $OA(d,d-1)$}\label{sec:d,d-1}

For orthogonal arrays $OA(d,d-1)$ with arbitrary $d\ge 3$, Lemma
\ref{lem:counting} gives the following relation.

\begin{theorem}\label{thm:d,d-1} Let  $S$ be an orthogonal array
$OA(d,d-1)$ on  a set  $X$ with cardinality $n$. For each
$r$--coloring of $X$ and each color class $X_i$ we have
$$
|S_i|+(-1)^{d-1}|M_i|=((1-c_i)^d-(-1)^dc_i^d)n^{d-1},
$$
where $S_i$ denotes  the set of vectors in $S$ which miss color $i$
and $M_i$ is the set of monochromatic vectors of color $i$. In
particular, the total number $|M|$ of monochromatic vectors
satisfies
\begin{equation}\label{eq:d,d-1}
\sum_{i=1}^r|S_i|+(-1)^{d-1}|M|=\sum_{i=1}^r\left((1-c_i)^d-(-1)^dc_i^d\right)n^{d-1}.
\end{equation}
\end{theorem}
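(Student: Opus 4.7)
The plan is to apply Lemma~\ref{lem:counting} with a one-parameter family of vectors $\uu$ supported on a single coordinate, and then to take a carefully chosen alternating sum. Fix a color $i$ and, for each $u = 0, 1, \ldots, d-1$, set $\uu = u\ee_i$ where $\ee_i$ is the $i$-th standard basis vector. Since $|\uu| = u \le d-1 = k$, Lemma~\ref{lem:counting} applies and yields
\begin{equation*}
\sum_{j=0}^{d} \binom{j}{u}\, a_j \;=\; \binom{d}{u}\, c_i^{\,u}\, n^{d-1},
\qquad u = 0,1,\ldots,d-1,
\end{equation*}
where $a_j$ denotes the number of vectors in $S$ with exactly $j$ coordinates in $X_i$. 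The key observation is that $a_0 = |S_i|$ (vectors missing color $i$) and $a_d = |M_i|$ (monochromatic of color $i$), so I want to combine these $d$ equations to isolate the pair $(a_0,a_d)$.

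The natural combination to try is the alternating sum $\sum_{u=0}^{d-1}(-1)^u \cdot(\text{$u$-th equation})$. On the right-hand side this produces $\bigl(\sum_{u=0}^{d-1}(-1)^u\binom{d}{u}c_i^u\bigr)n^{d-1}$, which by the binomial theorem is exactly $\bigl((1-c_i)^d-(-1)^dc_i^d\bigr)n^{d-1}$, matching the stated right-hand side. On the left-hand side, after exchanging the order of summation one encounters the kernel $K(j) := \sum_{u=0}^{d-1}(-1)^u\binom{j}{u}$ for $0\le j\le d$. A short computation shows that $K(0)=1$, that $K(j)=(1-1)^j=0$ for $1\le j\le d-1$, and that $K(d)=0-(-1)^d=(-1)^{d-1}$. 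Thus the left-hand side collapses to $a_0+(-1)^{d-1}a_d = |S_i|+(-1)^{d-1}|M_i|$, proving the first identity.

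The second identity~\eqref{eq:d,d-1} then follows immediately by summing over $i=1,\ldots,r$ and noting that $|M|=\sum_i|M_i|$, since the color classes of a monochromatic vector are disjoint.

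There is no real obstacle: the argument is essentially a linear-algebra maneuver on the $d$ linear identities supplied by Lemma~\ref{lem:counting}. The only step requiring care is the choice of the alternating coefficients $(-1)^u$, which is dictated by the desire to make the kernel $K(j)$ vanish for the ``uninteresting'' middle values $1\le j\le d-1$; the asymmetry in the conclusion between $|S_i|$ and $|M_i|$ (the factor $(-1)^{d-1}$) reflects precisely the asymmetry of $K(0)$ and $K(d)$.
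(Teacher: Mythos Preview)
Your proof is correct and is essentially identical to the paper's own argument: the paper also applies Lemma~\ref{lem:counting} to the vectors $\uu_j=(j,0,\ldots,0)$ for $j=0,\ldots,d-1$, takes the same alternating sum, and evaluates the same kernel $\sum_{j}(-1)^j\binom{v_1}{j}$ (your $K(j)$) to collapse the left side to $|S_1|+(-1)^{d+1}|M_1|$. The only differences are notational (your $a_j$ groups the terms $s(\vv)$ with $v_i=j$).
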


\begin{proof} Without loss of generality we may assume $i=1$. Consider the alternating sum of the equations
(\ref{eq:counting}) for vectors of  type ${\mathbf u}_j=(j,0,\ldots
,0),\; j=0,1,\ldots ,d-1$.  We have
\begin{eqnarray}
\sum_{j=0}^{d-1}(-1)^j{d\choose
j}|X_1|^j|X|^{d-1-j}&=&\sum_{j=0}^{d-1}(-1)^j\sum_{\|\vv\|=d}
{v_1\choose j}s(\vv)\nonumber
\\&=&\sum_{\|\vv\|=d}\left( \sum_{j=0}^{d-1}(-1)^j {v_1\choose
j}\right)s(\vv)\nonumber
\\&=&\sum_{\|\vv\|=d, v_1=0}s(\vv)+(-1)^{d+1}s(d,0,\ldots ,0)\nonumber
\\&=&|S_1|+(-1)^{d+1}|M_1|\label{eq:alternating},
\end{eqnarray}
where $S_1$ denotes the set of vectors which miss color $1$ and
$M_1$ denotes the set of vectors with all entries of color $1$. The
first term of the above equalities can be written as
$$\frac{1}{n}\left( (n-|X_1|)^d-(-1)^d|X_1|^d\right)=\left((1-c_1)^d-(-1)^dc_1^d\right)n^{d-1},$$
which gives the first part of the statement. Equality
\eqref{eq:d,d-1}   is  simply obtained by adding up the equations
\eqref{eq:alternating} for $i=1,\ldots ,d$.
\end{proof}

For $2$--colorings, $S_1$ and $S_2$ are just  the set of
monochromatic vectors of color $2$ and $1$ respectively. Thus
equation \eqref{eq:d,d-1} shows that, for $d$ odd, the number of
monochromatic vectors depends only on the cardinalities of the color
classes independently of their distribution, which is the main
result in \cite{ccs}.

In particular we get the following Corollary for $3$--colorings,
which is a slight generalization of a result by Balandraud
\cite[Corollary 2]{balandraud}. Here a vector is said to be {\it
rainbow} if all colors are present.

\begin{corollary}\label{d,r=3} Let  $S$ be an orthogonal array
$OA(d,d-1)$ on   $X$. For each   $3$--coloring of $X$ we have
$$
(1+(-1)^{d-1})|M|-|R|=\left(\sum_{i=1}^3(
(1-c_i)^d-(-1)^dc_i^d)-1\right)n^{d-1}.
$$
\end{corollary}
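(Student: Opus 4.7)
The plan is to deduce Corollary \ref{d,r=3} from the identity \eqref{eq:d,d-1} of Theorem \ref{thm:d,d-1} by rewriting $\sum_{i=1}^3|S_i|$ in terms of $|M|$ and $|R|$ via a simple double count. The only ingredient beyond that theorem is that, for a $3$--coloring, each vector of $S$ falls into exactly one of three types according to how many colors appear among its coordinates.

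First I would partition $S$ into $M$ (all entries the same color), $R$ (all three colors appear), and the remaining set $B$ of \emph{bichromatic} vectors (exactly two colors appear). Since $S$ is an $OA(d,d-1)$ we have the total count $|S|=n^{d-1}$, hence
\begin{equation*}
|B|=n^{d-1}-|M|-|R|.
\end{equation*}

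Next I would count the contribution of each vector to $\sum_{i=1}^3|S_i|$, where $S_i$ is the set of vectors that miss color $i$. A monochromatic vector of color $j$ misses the other two colors, so it contributes $2$ to the sum; a bichromatic vector misses exactly one color, so it contributes $1$; and a rainbow vector contributes $0$. Therefore
\begin{equation*}
\sum_{i=1}^3|S_i| \;=\; 2|M|+|B| \;=\; |M|+n^{d-1}-|R|.
\end{equation*}

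Finally I would substitute this into equation \eqref{eq:d,d-1} with $r=3$, obtaining
\begin{equation*}
|M|+n^{d-1}-|R|+(-1)^{d-1}|M| \;=\; \sum_{i=1}^3\!\left((1-c_i)^d-(-1)^d c_i^d\right)n^{d-1},
\end{equation*}
and rearrange to isolate $(1+(-1)^{d-1})|M|-|R|$ on the left, absorbing the $-n^{d-1}$ into the right-hand side. This yields the claimed identity. There is no real obstacle here: the argument is just the bookkeeping step above, and the substantive work has already been carried out in Theorem \ref{thm:d,d-1}. The only point worth double-checking is the characterization of \emph{rainbow} used in the corollary (``all colors are present''), which is exactly what makes the contribution of a rainbow vector to $\sum|S_i|$ equal to zero.
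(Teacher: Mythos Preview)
Your proof is correct and follows essentially the same route as the paper: both obtain $\sum_{i=1}^3|S_i|=|M|+n^{d-1}-|R|$ and substitute into \eqref{eq:d,d-1}. The paper reaches that identity via inclusion--exclusion, writing $|R|=|\cap_j\overline{S_j}|=|S|-\sum_i|S_i|+|M|$, whereas you obtain it by the equivalent direct double count over the mono/bichromatic/rainbow partition.
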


\proof With our current notion of rainbow vectors we have $$
|R|=|\cap_{j=1}^3 \bar{S_j}|=|S|-\sum_{i=1}^3|S_i|+|M|.$$ By
substitution in the last equation of Theorem \ref{thm:d,d-1} we have
\begin{eqnarray*}
\sum_{i=1}^3|S_i|+(-1)^{d+1}|M|&=&|X|^{d-1}-|R|+(1+(-1)^{d+1})|M|\\
&=&\sum_{i=1}^r\left((1-c_i)^d-(-1)^dc_i^d\right)n^{d-1},
\end{eqnarray*}
as claimed.\qed

It follows from Corollary \ref{d,r=3} that, for $d$ even, the number
of rainbow vectors in a $3$--coloring of the base set of an
orthogonal array $OA(d,d-1)$ depends only on the cardinality of the
color classes but not on the distribution of the colors.

\section{Color patterns in $OA(d,k)$}\label{sec:d,k}

Orthogonal arrays $OA(d,k)$ include sets of solutions of linear
systems: for a  $(d\times m)$ integer matrix $A$ such that every
$(m\times m)$ submatrix is nonsingular, the set of solutions of the
linear system $Ax=b$ in an abelian group forms an orthogonal array
$OA(d,d-m)$. In this general context Lemma \ref{lem:counting}  still
provides some information  on the distribution of color patterns.
Recall that a color pattern in an $r$--coloring of the base set of
an orthogonal array $OA(d,k)$ is identified by a vector
$\vv=(v_1,\ldots ,v_r)$ with $\sum_iv_i=d$ where entry $v_i$ denotes
the number of appearances of color $i$.  We consider the distance
between two color $r$--vectors  $\uu, \ww$ given by
$d(\uu,\ww)=\sum_i|u_i-w_i|$.

\begin{theorem}\label{thm:asym} Let $S$ be an orthogonal array  $OA(d,k)$ on a set  $X$ and let $c$ be
an $r$--coloring of $X$ with $\alpha=\min_ic_i$. For each color
pattern $\vv=(v_1,\ldots ,v_r)$ there is  a color pattern
$\vv'=(v'_1,\ldots ,v'_r)$   at distance  $d(\vv,\vv')\le 2(d-k)$
such that there are at least $$ s(\vv')\ge \frac{1}{{d-k+r-1\choose
r-1}}  (\alpha n)^k $$ vectors of $S$ colored with $\vv'$.
\end{theorem}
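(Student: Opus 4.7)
The plan is to apply Lemma \ref{lem:counting} with a weight-$k$ vector $\uu$ chosen componentwise below $\vv$, and then average the resulting identity to isolate one pattern $\vv'$. Given $\vv$ with $|\vv|=d$, first pick $\uu=(u_1,\ldots,u_r)$ with $u_i\le v_i$ for all $i$ and $|\uu|=k$; such a $\uu$ exists because $|\vv|=d\ge k$, for instance by decreasing entries of $\vv$ until the total drops to $k$. Substituting $\uu$ into \eqref{eq:counting} and using $\binom{\vv'}{\uu}=0$ whenever $\vv'\not\ge\uu$, one obtains
$$\sum_{|\vv'|=d,\ \vv'\ge\uu}\binom{\vv'}{\uu}\,s(\vv')=\binom{d}{\uu}\,c_1^{u_1}\cdots c_r^{u_r}\,n^k.$$

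The next step counts the terms on the left and verifies the distance condition for free. The patterns $\vv'\ge\uu$ with $|\vv'|=d$ biject via $\vv'\mapsto\vv'-\uu$ with $r$-tuples of nonnegative integers summing to $d-k$, so there are exactly $\binom{d-k+r-1}{r-1}$ of them. For any such $\vv'$, since $u_i\le\min(v_i,v'_i)$,
$$d(\vv,\vv')=\sum_i|v'_i-v_i|\le\sum_i(v'_i-u_i)+\sum_i(v_i-u_i)=2(d-k),$$
so every $\vv'$ that appears already lies within the required radius. Pigeonhole on the identity then produces some $\vv'$ with
$$\binom{\vv'}{\uu}\,s(\vv')\ge\frac{\binom{d}{\uu}\,c_1^{u_1}\cdots c_r^{u_r}\,n^k}{\binom{d-k+r-1}{r-1}}.$$

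To turn this into a lower bound on $s(\vv')$ alone, I would use the trivial inequality $c_1^{u_1}\cdots c_r^{u_r}\ge\alpha^{|\uu|}=\alpha^k$ together with the multinomial inequality $\binom{d}{\uu}\ge\binom{\vv'}{\uu}$, which I expect to be the main technical point. Multiplying the latter by $\prod_i u_i!$ reduces it to $d(d-1)\cdots(d-k+1)\ge\prod_i v'_i(v'_i-1)\cdots(v'_i-u_i+1)$, and this admits a short injection argument: fixing any partition of a $d$-element set into parts of sizes $v'_1,\ldots,v'_r$, the right-hand side counts ordered $k$-tuples of distinct elements whose first $u_1$ entries come from the first part, the next $u_2$ from the second, and so on, a subclass of the $d(d-1)\cdots(d-k+1)$ ordered $k$-tuples of distinct elements on the left. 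Dividing through by $\binom{\vv'}{\uu}$ and applying both inequalities delivers the claimed bound $s(\vv')\ge(\alpha n)^k/\binom{d-k+r-1}{r-1}$.
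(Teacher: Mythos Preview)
Your proof is correct and follows essentially the same route as the paper's: choose $\uu\le\vv$ with $|\uu|=k$, apply Lemma~\ref{lem:counting}, count the at most $\binom{d-k+r-1}{r-1}$ patterns $\vv'\succeq\uu$, bound the distance via the triangle inequality through $\uu$, and use $\binom{\vv'}{\uu}\le\binom{d}{\uu}$ together with $c_i\ge\alpha$. The only cosmetic difference is that the paper applies the multinomial inequality \emph{before} the pigeonhole step (bounding the sum $\sum s(\vv')$ directly), whereas you pigeonhole first on $\binom{\vv'}{\uu}s(\vv')$ and divide afterwards; both orderings yield the same bound, and your injection argument for $\binom{d}{\uu}\ge\binom{\vv'}{\uu}$ is a welcome justification of a step the paper leaves implicit.
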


\begin{proof} We say that an $r$--vector $\ww$ dominates the $r$--vector $\uu$,
written $\ww\succeq\uu$,  if $w_1\ge u_1,\ldots ,w_r\ge u_r$.

Let $\vv$ be a given color pattern and choose $\uu =(u_1,\ldots
,u_r)$ with $|\uu|=k$ such that $\vv\succeq\uu$.  For every vector
$\vv'$ with $|\vv'|=d$ we have $ {\vv'\choose \uu}\le {d\choose
\uu}. $ By equation  \eqref{eq:counting},
$$
\left( {d\choose \uu} c_1^{u_1}\cdots
c_r^{u_r}\right)n^k=\sum_{\vv'\succeq \uu} {\vv'\choose \uu} s(\vv')
\le {d\choose \uu} \sum_{\vv'\succeq \uu} s(\vv').
$$
There are at most   ${d-k+r-1\choose r-1}$ vectors $\vv'$ with
$|\vv'|=d$  which dominate $\uu$, and each such $\vv'$ is at
distance $d(\vv',\vv)\le d(\vv',\uu)+d(\vv,\uu)\le 2(d-k)$ from
$\vv$. Hence,  if $\alpha=\min\{c_1,\ldots ,c_r\}$,
$$
\max_{\vv'\succeq \uu}s(\vv')\ge \frac{1}{ {d-k+r-1\choose r-1}}
(c_1^{u_1}\cdots c_r^{u_r})  n^k\ge \frac{1}{ {d-k+r-1\choose r-1}}
(\alpha n)^k.
$$
\end{proof}

\end{document}